\newtheorem*{rep@theorem}{\rep@title}
\newcommand{\newreptheorem}[2]{%
\newenvironment{rep#1}[1]{%
 \def\rep@title{#2 \ref{##1}}%
 \begin{rep@theorem}}%
 {\end{rep@theorem}}}
\newtheorem{thmA}{Theorem}
\newtheorem{theorem}{Theorem}[section]
\newtheorem{proposition}[theorem]{Proposition}
\newtheorem{lemma}[theorem]{Lemma}
\theoremstyle{remark}
\newtheorem{remark}[theorem]{Remark}
\newtheorem{remarks}[theorem]{Remarks}
\newtheorem{example}[theorem]{Example}
\newtheorem{notation}[theorem]{Notation}
\theoremstyle{definition}
\newtheorem{definition}[theorem]{Definition}
\def\Z{\mathbb Z}
\def\N{\mathbb N}
\def\R{\mathbb R}
\def\To{\mathbb T}
\def\G{\Gamma}
\def\S{\mathbb S}
\def\-{\overline}
\def\wh{\widehat}
\def\rk{{\rm{rk}}}
\def\epi{{\rm{Epi}}}
\def\x{\underline{x}}
\def\T{\mathcal{T}}
\def\J{\mathcal{J}}
\def\P{\mathcal{P}}
\def\G{\Gamma}
\def\<{\langle}
\def\>{\rangle}
\def\x{\underline{\bf{x}}}
\def\y{\underline{\bf{y}}}
\def\z{\underline{\bf{z}}}
\begin{document}

\title[Profinite isomorphism]{The isomorphism problem for profinite completions of residually finite groups}

\author[Martin R. Bridson]{Martin R.~Bridson}
\address{Martin R.~Bridson, Mathematical Institute, University of Oxford, Andrew Wiles Building, Oxford OX2 6GG, UK}
\email{bridson@maths.ox.ac.uk} 
 
\author[Henry Wilton]{Henry Wilton}
\address{Henry Wilton, Department of Mathematics, University College London, Gower Street, London WC1E  6BT, UK}
\email{hwilton@math.ucl.ac.uk}

\date{10 January 2014}

\thanks{Both authors are supported by the EPSRC. Bridson is also supported by a Wolfson Research Merit Award from the Royal Society.}

\begin{abstract}
We consider pairs of finitely presented, residually finite groups $u:P\hookrightarrow
\G$. We prove that there is no algorithm that, given an arbitrary such
pair, can determine whether or not the associated map of profinite completions $\hat{u}: \wh{P}
\to \wh{\G}$ is an isomorphism. Nor do there exist algorithms that can decide whether
$\hat{u}$ is surjective, or whether $ \wh{P}$
is  isomorphic to $ \wh{\G}$.
\end{abstract}

\maketitle

\centerline{\small{\em{For Pierre de la Harpe on his 70th birthday, with respect and affection}}}

\def\G{\Gamma}
\def\epi{{\rm{Epi}}}

\section{Introduction} The profinite completion of a group $\G$ is the inverse
limit of the directed system of
finite quotients of $\G$; it is denoted $\wh{\G}$. 
The natural
map $\G\to \wh{\G}$ is injective if and only if $\G$ is residually finite.
A  \emph{Grothendieck pair} is a monomorphism $u: P\hookrightarrow\Gamma$ 
of finitely presented, residually finite groups such
that $\hat u :\wh P\to\wh\Gamma$ is an isomorphism but $P$ is not isomorphic to 
$\Gamma$. The existence of Grothendieck pairs was established in \cite{BG}, 
raising the following recognition problem:
{\em is there an algorithm
that, given a monomorphism of finitely presented, residually finite groups $u: P\hookrightarrow\Gamma$,
can determine whether or not $\hat{u}$ is an isomorphism? }

We shall resolve this question by proving the following theorem.

\begin{thmA}\label{t:main}
There are recursive sequences of finite presentations for residually finite groups 
$P_n=\<A_n \mid R_n\>$ and $\G_n=\<B_n\mid S_n\>$ together with explicit monomorphisms $u_n:P_n\hookrightarrow \G_n$,
 such that:
\begin{enumerate}
\item $\widehat{P}_n\cong\widehat{\G}_n$ if and only if the induced map of profinite completions $\hat{u}_n$ is an isomorphism; 
\item $\hat{u}_n$ is an isomorphism if and only if $\hat{u}_n$ is surjective; and
\item the set $\{n\in\N\mid \widehat{P}_n\not\cong\widehat{\G}_n\}$
is recursively enumerable but not recursive.
\end{enumerate}
\end{thmA}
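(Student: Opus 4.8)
The plan is to realise each pair $(P_n,\G_n)$ as a fibre product, in the style of Platonov--Tavgen and Bridson--Grunewald, and to route the undecidability through the problem ``does a finitely presented group possess a non-trivial finite quotient?''. The first ingredient will be a recursive family of finite presentations of groups $Q_n$ that are superperfect ($H_1(Q_n;\Z)=H_2(Q_n;\Z)=0$), of type $F_\infty$, have $\widehat{Q}_n$ finite for every $n$, and for which $\{\,n:\widehat{Q}_n\neq 1\,\}$ is not recursive. Such a family is obtained by a ``conditional collapse'' of Adian--Rabin/Rips type -- encoding the unsolvable word problem of a fixed finitely presented group into the existence of a finite quotient -- together with free-product, HNN, graph-of-groups and acyclification steps that preserve superperfection and the finiteness type and keep $\widehat{Q}_n$ finite (one can arrange, for instance, that each $Q_n$ is either an acyclic group with no finite quotients, or the direct product of a fixed finite superperfect group with such an acyclic group, depending undecidably on the input word). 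Assembling this family is the first technical core of the argument.

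Next I would feed the presentation of $Q_n$ into the residually finite (Wise) form of the Rips construction, arranged so that the output $H_n$ is a torsion-free $C'(1/6)$ hyperbolic group with a finite aspherical $2$-dimensional presentation complex and is good in Serre's sense; this produces algorithmically a finite presentation of $H_n$ and a short exact sequence $1\to N_n\to H_n\xrightarrow{p_n}Q_n\to 1$ with $N_n$ finitely generated. I would then set $\G_n:=H_n\times H_n$ and take $P_n\le\G_n$ to be the fibre product $\{(x,y):p_n(x)=p_n(y)\}$, with $u_n:P_n\hookrightarrow\G_n$ the inclusion. Since $Q_n$ is of type $F_3$, the 1--2--3 Theorem of Baumslag--Bridson--Miller--Short supplies a finite presentation of $P_n$ algorithmically, and $P_n$ and $\G_n$ are manifestly residually finite.

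Part (2) follows from the classical analysis of fibre products. A density computation identifies the closure of $P_n$ in $\widehat{\G}_n=\widehat{H}_n\times\widehat{H}_n$ with $\{(a,b):\widehat{p}_n(a)=\widehat{p}_n(b)\}$, so $\hat u_n$ is surjective precisely when $N_n$ is dense in $\widehat{H}_n$, i.e. precisely when $\widehat{Q}_n=1$; and when $\widehat{Q}_n=1$ the Platonov--Tavgen criterion (using that $Q_n$ is finitely presented with $H_2(Q_n;\Z)=0$) upgrades surjectivity to bijectivity. Hence ``$\hat u_n$ an isomorphism'' $\Leftrightarrow$ ``$\hat u_n$ surjective'' $\Leftrightarrow$ ``$\widehat{Q}_n=1$''. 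For part (1) the direction ``$\hat u_n$ iso $\Rightarrow\widehat{P}_n\cong\widehat{\G}_n$'' is trivial; for the converse I would assume $\widehat{P}_n\cong\widehat{\G}_n$ and suppose, for contradiction, that $\widehat{Q}_n\neq 1$, so that $\widehat{Q}_n$ is finite of order $m\geq 2$ and the image $\overline{P}_n$ of $\hat u_n$ is a proper open subgroup of $\widehat{\G}_n$ of index $m$; composing with the isomorphism $\widehat{\G}_n\cong\widehat{P}_n$ yields a surjection $\widehat{\G}_n\twoheadrightarrow\overline{P}_n$. This I would rule out using the profinite Euler characteristic, which is defined, nonzero, and multiplicative over open subgroups and along extensions because $H_n$ is good with a finite $2$-dimensional classifying space, with $\chi(\widehat{\G}_n)=\chi(H_n)^2=(3k_n+m_n-1)^2>0$ ($k_n,m_n$ being the numbers of generators and relators of $Q_n$). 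Multiplicativity forces the kernel of the surjection to have Euler characteristic $1/m<1$; but this kernel is a closed, hence torsion-free, subgroup of $\widehat{\G}_n$ of finite cohomological dimension, and showing that its Euler characteristic must be an integer gives the contradiction, so $m=1$ and $\widehat{Q}_n=1$. I expect this integrality step -- equivalently, the statement that $\widehat{\G}_n$ is not abstractly isomorphic to a proper open subgroup of itself -- to be the main obstacle, and it is precisely where the good, two-dimensional nature of $H_n$ is needed.

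Finally, for part (3): put $S=\{\,n:\widehat{P}_n\not\cong\widehat{\G}_n\,\}$. By parts (1) and (2), $S=\{\,n:\widehat{Q}_n\neq 1\,\}$. This set is recursively enumerable, since from the presentation of $Q_n$ one semi-decides ``$\widehat{Q}_n\neq 1$'' by searching over finite groups $F$ and the finitely many maps of the generators of $Q_n$ into $F$, halting as soon as one is an epimorphism onto a non-trivial group; and it is not recursive because $\{\,n:\widehat{Q}_n\neq 1\,\}$ was engineered to have that property. Everything apart from the construction of the family $\{Q_n\}$ and the profinite-rigidity step inside (1) is a routine assembly of the Rips construction, the 1--2--3 Theorem, the Platonov--Tavgen criterion, and the standard calculus of Euler characteristics of profinite groups.
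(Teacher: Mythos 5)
Your overall template---Wise's residually finite Rips construction applied to a bad sequence $Q_n$, the fibre product $P_n<H_n\times H_n$, the algorithmic 1--2--3 Theorem, and the Platonov--Tavgen criterion to get parts (2) and (3)---is exactly the paper's, but your route to part (1), ruling out $\widehat{P}_n\cong\widehat{\G}_n$ when $\hat u_n$ is not onto, has a genuine gap that you yourself flag. First, you need $\widehat{Q}_n$ to be finite for every $n$ (otherwise the closure of $P_n$ is not open and the index/Euler-characteristic bookkeeping never starts), and no known construction supplies an undecidable family with that property: the undecidability ultimately rests on Slobodskoi, and in the groups one builds from it there is no control over the profinite completion when it is non-trivial; your proposed dichotomy (acyclic with no finite quotients versus a fixed finite superperfect group times such a group) is precisely the hard part and is asserted, not proved. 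Second, even granting that, the Euler-characteristic step does not go through as stated: the kernel $K$ of $\widehat{\G}_n\twoheadrightarrow\overline{P}_n$ is merely a closed normal subgroup with no finiteness properties, so $\chi(K)$ need not be defined, multiplicativity along that extension is not available, goodness of the Rips group $H_n$ is asserted rather than established, and the integrality claim is exactly what you admit you cannot prove. The paper sidesteps all of this with an elementary count: for a non-abelian finite simple group $S$ one has $|\epi(P,S)|=2|\epi(H,S)|-|\epi(Q,S)|$ while $|\epi(H\times H,S)|=2|\epi(H,S)|$; since $Q_n$ is perfect, $\widehat{Q}_n\ncong 1$ forces $\epi(Q_n,S)\neq\emptyset$ for some such $S$, hence $|\epi(P_n,S)|\neq|\epi(\G_n,S)|$ and $\widehat{P}_n\not\cong\widehat{\G}_n$---no finiteness of $\widehat{Q}_n$, no goodness, no profinite Euler characteristics.

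The second gap is the input to the algorithmic 1--2--3 Theorem. Knowing that $Q_n$ is of type $F_3$ (or $F_\infty$) is not enough: the algorithm requires, as explicit input, a finite set of generators of $\pi_2$ of the presentation as a $\Z Q_n$-module (equivalently, a finite combinatorial 3-skeleton of a $K(Q_n,1)$), and by the authors' earlier work this data cannot in general be computed from a presentation even with a promise of strong finiteness properties. This is why the bulk of the paper is devoted to Theorem B: producing, algorithmically, explicit aspherical complexes $K_n$ with $H_1=H_2=0$ and undecidable profinite triviality, via the $\mathcal{J}$-construction, universal central extensions, and the tubular-bundle classifying spaces. Your ``conditional collapse plus acyclification'' sketch would have to deliver this 3-skeleton data explicitly, and as written it does not address the point. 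So the two places where you depart from the paper---the engineered finiteness of $\widehat{Q}_n$ together with the profinite Euler-characteristic rigidity for part (1), and the appeal to abstract finiteness properties in place of explicit $\pi_2$ data---are precisely the places where the proposal is incomplete.
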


The groups $\G_n$ that we construct are of the form $H_n\times H_n$,
where $H_n$ is a residually finite hyperbolic group with a 2-dimensional classifying space.

Theorem \ref{t:main} is an outgrowth of our recent work on the
profinite triviality problem for finitely presented groups \cite{BWprotrivial}:
we shall prove it by applying
 the main construction of \cite{BG} to a sequence of finitely
presented groups satisfying the following strengthening of  \cite[Theorem A]{BWprotrivial}.

\begin{thmA}\label{t:tech}
There is a recursive sequence of finite combinatorial\footnote{A {\em combinatorial}
map between complexes is one that maps open cells homeomorphically to open
cells, and a CW complex is {\em combinatorial} if the attaching map of each closed $k$-cell is a
combinatorial map from a polyhedral subdivision of the $(k-1)$-sphere.} CW-complexes $K_n$ 
so that
\begin{enumerate}
\item each $K_n$ is aspherical;
\item $H_1(K_n,\Z)\cong H_2(K_n,\Z)\cong 0$ for all $n\in\N$; and
\item  the set of natural numbers
\[
\{n\in\N\mid \widehat{\pi_1K_n}\ncong 1\}
\]
is recursively enumerable but not recursive.
\end{enumerate}
\end{thmA}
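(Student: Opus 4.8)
The plan is to upgrade the construction underlying \cite[Theorem A]{BWprotrivial}. That theorem already provides a recursive sequence of finitely presented groups $G_n$---realised, after a routine subdivision, as fundamental groups of finite aspherical combinatorial $2$-complexes---for which $\{n\in\N\mid \widehat{G_n}\ncong 1\}$ is recursively enumerable but not recursive; moreover each $G_n$ is assembled uniformly from a fixed ``seed'' group $\Sigma$ (finitely presented, superperfect, with a $2$-dimensional classifying space, and with trivial profinite completion) together with an encoding of a Turing machine $M$, so that $\widehat{G_n}\ncong 1$ exactly when $M$ halts on input $n$. Granting this, clauses (1) and (3) of Theorem~\ref{t:tech} are essentially immediate: asphericity follows from the standard combination theorem for a graph of aspherical spaces once the edge maps are checked to be $\pi_1$-injective; the combinatorial CW structure is automatic for presentation $2$-complexes and the gluings used, up to subdivision; and the set in (3) is recursively enumerable because one can enumerate the finite quotients of a finite presentation, and non-recursive because the construction yields a many--one reduction of the halting problem. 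The genuinely new content is clause (2): one needs $H_1(K_n;\Z)=H_2(K_n;\Z)=0$ for \emph{every} $n$, i.e.\ $\pi_1 K_n$ must be superperfect (with a $2$-dimensional classifying space) no matter what $M$ does on input $n$.

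Here lies the obstacle. The elementary moves by which one encodes a Turing machine into a group---HNN extensions and amalgams over free or surface subgroups---never preserve superperfectness: by Mayer--Vietoris an HNN extension contributes a free $\Z$-summand to $H_1$, and for an amalgam $A*_C B$ with $A,B$ superperfect one has $H_2(A*_C B)\cong H_1(C)$. Nor can the homology simply be repaired afterwards: passing to a quotient of $G_n$ risks destroying exactly the finite quotients that detect halting, while embedding $G_n$ into a functorially chosen acyclic (hence superperfect) overgroup destroys the encoding in the opposite direction, since the usual acyclic envelopes admit \emph{no} non-trivial finite quotients. The route I propose is to rebuild the encoding from homologically balanced pieces, in the spirit of Higman's group: every step that adjoins new generators should be realised as an amalgamation of two finite aspherical $2$-complexes along a subcomplex whose homology is absorbed by the two sides, so that the Mayer--Vietoris connecting maps force $H_1$ and $H_2$ of the amalgam to vanish at each stage and the running fundamental group stays superperfect with a $2$-dimensional classifying space. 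Concretely, one replaces ``$\Sigma$ together with stable letters'' by ``$\Sigma$ together with copies of a fixed superperfect auxiliary group, glued along carefully chosen subgroups'', arranged so that (i) this homological bookkeeping closes up, and (ii) there remains a uniformly computable retraction of $\pi_1 K_n$ onto $\Sigma$ together with a finite family of ``obstruction'' elements whose triviality in every finite quotient is equivalent to $M$ halting on $n$. Granting (i) and (ii), one still has $\widehat{\pi_1 K_n}\cong 1$ if and only if $M$ halts on $n$, and clause (3) goes through as before.

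The step I expect to be the main obstacle is precisely this simultaneous bookkeeping: forcing the homological cancellations that yield $H_1=H_2=0$ and a $2$-dimensional classifying space to coexist, uniformly and recursively in $n$, with the control of finite quotients that keeps the halting problem encoded. Each constraint is manageable in isolation---finitely presented superperfect groups with $2$-dimensional classifying spaces exist and are built by Higman-style amalgams, and the profinite-triviality encoding is the substance of \cite{BWprotrivial}---but the amalgamating subgroups chosen to annihilate homology must be such that no new finite quotient appears when $M$ fails to halt while the halting-detecting finite quotients of $G_n$ survive when it does; checking this, along with the asphericity of the resulting complexes of spaces, is where the real work lies.
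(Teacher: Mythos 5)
There is a genuine gap: your text is a research programme rather than a proof. You correctly identify the crux---clause (2), i.e.\ keeping $\pi_1K_n$ superperfect while preserving the encoding of the halting problem in the finite quotients---but your proposed solution (``rebuild the encoding from homologically balanced pieces \dots arranged so that (i) the homological bookkeeping closes up and (ii) there remains a computable family of obstruction elements'') is never carried out, and you say yourself that verifying (i) and (ii) ``is where the real work lies.'' Nothing in the proposal shows that such a re-engineered encoding exists, and the difficulties you list (Mayer--Vietoris forcing $H_1$ or $H_2$ to grow under HNN extensions and amalgams, quotients destroying the halting-detecting finite images, acyclic envelopes having no finite images at all) are precisely the ones left unresolved. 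You also impose an unnecessary extra constraint---a $2$-dimensional classifying space for $\pi_1K_n$---which Theorem \ref{t:tech} does not ask for and which makes your plan strictly harder; and you lean on the unwarranted premise that the groups of \cite[Theorem A]{BWprotrivial} already come with finite aspherical complexes.

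The paper avoids your obstacle entirely by repairing the homology \emph{after} the encoding, in two algorithmic steps, instead of threading superperfectness through the Turing-machine construction. First, each presentation $\P(k)$ from \cite{BWprotrivial} (with $G(k)$ perfect) is fed into the construction of Subsection \ref{ss:main}: every relator $2$-cell is replaced by a copy of the aspherical presentation complex $X$ of Higman's group $J$ (superperfect, $\wh J\cong 1$), attached along an annulus. By Proposition \ref{p:properties} the resulting $2$-complex is aspherical whether or not $\P(k)$ was, its fundamental group $\J_{G(k)}$ is perfect, and $\wh{\J}_{G(k)}\cong 1$ iff $\wh{G(k)}\cong 1$; so $H_1$ is killed and clause (3) survives, with no tampering with the encoding. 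Second, $H_2$ is killed by passing to the universal central extension $\widetilde{\J}_{G(k)}$ (algorithmically presented via Proposition \ref{p:tilde}), which is superperfect and has a nontrivial finite quotient iff $\J_{G(k)}$ does---this central-extension move is exactly the alternative your ``quotient vs.\ acyclic envelope'' dichotomy overlooks. The remaining issue, which your sketch does not even isolate, is to produce a finite aspherical complex for $\widetilde{\J}_{G(k)}$ \emph{algorithmically}: Proposition \ref{p: tubular properties} shows it is $\pi_1$ of an aspherical tubular bundle $K^{X,c}(\underline{R},\underline{Z})$ of type $(d;n,m)$ with $d=m-n$, and since these bundles form a recursive family one enumerates them, presents their fundamental groups by Seifert--van Kampen, and performs a naive search for an isomorphism with $\widetilde{\P}_k$; the complex found is the required $K_n$ (of dimension $d+2$, not $2$). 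Without some substitute for these three steps, your outline does not establish the theorem.
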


In \cite[Theorem B]{mb:H2} the first author proved a less satisfactory version of Theorem \ref{t:main}.
 He too constructed a recursive sequence of pairs $\iota_n:P_n\hookrightarrow\G_n$,
with $P_n$ and $\G_n$ residually finite, such that there is no algorithm that can determine whether or not  $\hat{\iota}_n$ is an
isomorphism, nor whether $\wh{P}_n$ is isomorphic to $\wh{\G}_n$. But in his construction, although
the $\G_n$ are given by explicit
finite presentations, the subgroups  $P_n\hookrightarrow\G$ are given by specifying a finite generating set $\Sigma_n\subset\G$
(with a guarantee that $P_n=\<\Sigma_n\>$ is  finitely presentable). In \cite{BWggd} we explored in detail the
question of when such data is sufficient to allow the algorithmic construction of a finite presentation for $P_n$,
and in this situation it is not.  Indeed, the lack of an algorithm to present $P_n$ is an essential feature of
the proof  of  \cite[Theorem B]{mb:H2}: 
what is actually  proved is that one cannot decide if 
$\hat{\iota}_n$ is {\em injective} (note the contrast with our Theorem \ref{t:main})
 because there is no algorithm that can determine if the map $H_1(P,\Z)\to H_1(\G,\Z)$
induced by $\iota_n$ is injective; if one had a finite presentation for both groups, 
it would be easy to determine if $H_1(P,\Z)\to H_1(\G,\Z)$ were injective.

Theorem A deals with maps 
$u:P\rightarrow \G$ that are assumed to be {\em{monomorphisms}} of {\em{residually finite}} groups. If one
does not require the groups to be residually finite, then Theorem A of \cite{BWprotrivial} 
implies immediately that there does not exist  an algorithm to recognise if $\hat{u}:\wh{P}\to\wh{\G}$ is an isomorphism: it is enough to consider the case where $P$ is the trivial subgroup. If one also drops the requirement that $u:P\to\G$ is injective, then the result already follows from the work of Slobodskoi  that is invoked in \cite{BWprotrivial}:  Slobodskoi \cite{Slobo} exhibited a finitely presented group $G$ for which there is no algorithm that can determine which words $w$ have trivial image in $\wh{G}$; and since finitely generated profinite groups are Hopfian, $w$ has trivial image in $\wh{G}$ if and only if  the quotient map $G\to G /\langle\!\langle w\rangle\!\rangle$ induces an isomorphism of profinite completions.

\section{Reducing Theorem \ref{t:main} to Theorem \ref{t:tech}}

The reduction of Theorem \ref{t:main} to Theorem \ref{t:tech} follows a template for constructing 
Grothendieck pairs that was devised by Bridson and Grune-wald in \cite{BG} and first applied
to algorithmic questions in \cite{mb:karl}. 
The key ingredients are adaptations of the Rips construction \cite{rips} and the 1-2-3 Theorem \cite{bbms}.
 In our case, these will be applied to the
groups $\pi_1K_n$ supplied by Theorem \ref{t:tech}. 

\subsection{Rips and Algorithmic 1-2-3}

The following adaptation of the Rips construction is due to Wise \cite{wise:rf}.

\begin{theorem}\cite{wise:rf}\label{t:rips} There is an algorithm that takes as input a finite group presentation $\mathcal Q\equiv\<\x\mid \y\>$
and will output a finite presentation $\mathcal H\equiv\<\x,a,b,c\mid \z\>$ for a residually finite hyperbolic group $H$, so that
there is a short exact sequence
$$
1\to N\to  H\overset{p}\to Q \to 1,
$$
where $N=\<a,b,c\>$, the group $Q$ is presented by $\mathcal Q$, and $p:H\to Q$ is the map
implicit in the labelling of generators. Moreover, $|\z| = |\y| + 6|\x|$.
\end{theorem}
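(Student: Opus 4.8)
The plan is to follow the template of the classical Rips construction, refining the choice of the auxiliary words so as to force residual finiteness. Write $\mathcal Q\equiv\<\x\mid\y\>$ with $\x=\{x_1,\dots,x_m\}$ and $\y=\{r_1,\dots,r_n\}$, so $|\x|=m$ and $|\y|=n$. Declare $\mathcal H$ to have generating set $\{x_1,\dots,x_m,a,b,c\}$ and defining relators of two kinds: for each $j\le n$ a relator $r_j\,w_j$; and for each $i\le m$ and each $t\in\{a,b,c\}$ the two relators $x_itx_i^{-1}w_{i,t}^{-1}$ and $x_i^{-1}tx_iv_{i,t}^{-1}$, where the $w_j,w_{i,t},v_{i,t}$ are words in $a,b,c$ to be chosen. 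This gives exactly $|\y|+6|\x|$ relators. For \emph{any} admissible choice of these words the subgroup $N=\<a,b,c\>$ is normal in $H$ --- the conjugation relators witness $x_i^{\pm1}Nx_i^{\mp1}\subseteq N$ --- and setting $a=b=c=1$ recovers the presentation $\mathcal Q$, so $H/N\cong Q$ and $p$ is the map implicit in the labelling. Thus all the content lies in choosing the words well.

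First I would fix the words by a greedy procedure: build the letters of the $w_j,w_{i,t},v_{i,t}$ one at a time, at each step rejecting any choice of next letter that would create a common subword (a \emph{piece}) whose length exceeds one sixth of the length of the shortest relator. Only finitely many constraints are active at each step and the six-symbol alphabet $\{a^{\pm1},b^{\pm1},c^{\pm1}\}$ is ample enough to satisfy them, so the procedure terminates and outputs a $C'(1/6)$ presentation; since it reads off only the data of $\mathcal Q$ it is effective, which supplies the algorithm. Classical small-cancellation theory then gives that $H$ is hyperbolic and that the presentation $2$-complex is aspherical, hence a two-dimensional classifying space, so that $H$ is in addition torsion-free; and $N$ is generated by the three elements $a,b,c$.

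The substantive step, due to Wise, is to arrange that $H$ is moreover residually finite. Here I would impose on the greedy construction enough additional control over the overlaps of the auxiliary words that the presentation $2$-complex subdivides into a compact non-positively curved square complex whose hyperplanes meet the conditions of Haglund and Wise (two-sided, embedded, with no self- or inter-osculation); then $H$ is hyperbolic and virtually special, hence residually finite --- indeed $\Z$-linear. Alternatively, one invokes directly the separability argument of \cite{wise:rf}, which is what establishes residual finiteness of these particular small-cancellation quotients. I expect this to be the main obstacle: a single effective procedure must simultaneously meet the metric $C'(1/6)$ inequality \emph{and} the more delicate hyperplane (or separability) conditions, all while keeping the relator count at exactly $|\y|+6|\x|$ and each relator not a proper power. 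Once the words are in hand, everything else --- normality of $N$, the identification $H/N\cong Q$, hyperbolicity, asphericity, and the algorithmic packaging --- is routine.
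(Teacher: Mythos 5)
The paper does not prove this statement: Theorem \ref{t:rips} is quoted verbatim from Wise \cite{wise:rf}, so there is no internal argument to compare against. Measured against Wise's theorem itself, your scaffolding is the standard Rips template and is fine as far as it goes: one relator $r_jw_j$ per defining relator of $\mathcal Q$ plus six conjugation relators per generator gives exactly $|\y|+6|\x|$ relators; the conjugation relators make $N=\<a,b,c\>$ normal; killing $a,b,c$ recovers $\mathcal Q$, so $H/N\cong Q$ with $p$ the labelled map; an effective greedy choice of long words in $a,b,c$ achieves $C'(1/6)$; and $C'(1/6)$ implies hyperbolicity. None of that is in dispute.

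The genuine gap is exactly where you flag it: residual finiteness, which is the whole content of Wise's theorem, is not established. Of your two suggestions, ``invoke the separability argument of \cite{wise:rf}'' is circular --- that is the theorem being proved. The other route is not correct as described: it is not a matter of adding ``additional control over overlaps'' so that the presentation $2$-complex subdivides into a non-positively curved square complex with two-sided, embedded, non-osculating hyperplanes. The presentation complex of a $C'(1/6)$ presentation is not naturally a square complex, and the known way to get a proper cocompact action on a CAT(0) cube complex for such groups is Wise's separate cubulation theorem for small cancellation groups (via wallspaces), after which residual finiteness still needs Agol's virtual specialness theorem for cubulated hyperbolic groups. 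If you are content to quote those two results, then residual finiteness holds for \emph{any} $C'(1/6)$ choice of words and no extra conditions on your greedy procedure are needed --- but that is a different, anachronistic proof, not a repair of the one you sketch. Wise's actual argument in \cite{wise:rf} does not go through specialness at all: he makes a bespoke, explicit choice of the auxiliary words, compatible with the small-cancellation condition and the stated relator count, engineered so that residual finiteness follows from his earlier residual-finiteness results (for negatively curved polygons of finite groups); that careful choice is precisely the step your proposal leaves open.
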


The short exact sequences obtained by applying this theorem with $Q=\pi_1K_n$ provide input for the following
algorithmic version  of the 1-2-3 Theorem.  Recall that the \emph{fibre product} associated to a short exact sequence
\[
1\to N\to  H\overset{p}\to Q \to 1
\]
is the pre-image of the diagonal subgroup under the product map $p\times p:H\times H\to Q\times Q$.

\begin{theorem}\cite{bhms}\label{t:123}
 There exists an algorithm that, given a short exact sequence of groups
$1\to N\to H\to Q\to 1$, a finite generating set for $N$, a finite presentation of $H$,
and a finite combinatorial CW-complex that is the 3-skeleton of 
a $K(Q,1)$, will construct a finite presentation for the
associated fibre product $P<H\times H$, and express the generators of $P$ as words in
the generators of $H\times H$.
\end{theorem}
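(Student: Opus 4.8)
The plan is to revisit the proof of the $1$-$2$-$3$ Theorem of \cite{bbms} and verify that, once the defining data are presented combinatorially, every group and every relator that occurs in that proof can be written down by an algorithm. Write the given finite presentation of $H$ as $\langle\underline{x}\mid\underline{r}\rangle$, and let $\underline{a}=(a_1,\dots,a_k)$ be the given generators of $N$, recorded as words in $\underline{x}$; since $N$ is normal in $H$ they also normally generate it. One checks immediately that $P=(N\times 1)\cdot\Delta H$, so $P$ is generated by the diagonal elements $\hat x_i:=(x_i,x_i)$ together with the elements $\hat a_j:=(a_j,1)$, each of which is visibly a word in the generators of $H\times H$; this already supplies the expressions for the generators of $P$ that the statement requires. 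Two families of relators of $P$ are immediate: (i) the relators $\underline{r}$, read in the letters $\hat x_i$ --- valid because $(r,r)=(1,1)$ whenever $r=1$ in $H$; and (ii) for $\e=\pm1$, relators asserting that $\hat x_i^{\e}\hat a_j\hat x_i^{-\e}$ equals a prescribed word in the $\hat a_l$, namely one corresponding to a word in $\underline{a}$ that equals $x_i^{\e}a_jx_i^{-\e}$ in $H$. Such a word exists because $N$ is normal, and one finds it by search, since equality in $H$ is semidecidable. The group presented by (i) and (ii) alone maps onto $P$, its kernel lies inside the subgroup generated by the $\hat a_j$ --- which (ii) already makes normal --- and that kernel consists precisely of the relations of $N$ not yet implied by (i) and (ii). So the task reduces to producing \emph{finitely many} relators of $N$ which, adjoined to (i) and (ii), present $P$; this is where the $K(Q,1)$ enters.

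The relevant fact is that the finite presentation $\langle\underline{x}\mid\underline{r}\cup\underline{a}\rangle$ of $Q=H/N$ has presentation $2$-complex $Z$ whose second homotopy module $\pi_2(Z)$ is finitely generated over $\Z Q$ exactly when $Q$ is of type $F_3$; the given finite \emph{combinatorial} $3$-skeleton $\mathcal Z$ of a $K(Q,1)$ makes this finiteness effective. The $2$-skeleton $\mathcal Z^{(2)}$ is a finite presentation of $Q$, so a terminating search through Tietze transformations matches it with $\langle\underline{x}\mid\underline{r}\cup\underline{a}\rangle$, and the attaching maps of the finitely many $3$-cells of $\mathcal Z$ --- being combinatorial maps from polyhedral $2$-spheres --- are literally a finite list of identity sequences among the relators, from which one computes a $\Z Q$-generating set of $\pi_2(Z)$. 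Projecting such an identity sequence to $H$ kills its $\underline{r}$-entries and leaves a product of $H$-conjugates of the $a_j$ that is trivial in $H$; rewriting each conjugate as a word in $\underline{a}$, again by search, turns the identity sequence into a relator $s_\ell$ of $N$. The $1$-$2$-$3$ argument of \cite{bbms} then shows that the finitely many $s_\ell$ obtained in this way from a $\Z Q$-generating set of $\pi_2(Z)$, together with (i) and (ii), present $P$; tracing through that argument, each step is uniform and algorithmic in the combinatorial input.

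The hard part is this middle step: faithfully reproducing the homological mechanism of \cite{bbms}, which passes from a finite $\Z Q$-generating set for $\pi_2$ of the presentation complex of $Q$ to a finite relator set for the fibre product $P<H\times H$, while keeping every object combinatorial and every choice --- the words realising the action relators (ii), the Tietze sequence matching $\mathcal Z^{(2)}$ with $\langle\underline{x}\mid\underline{r}\cup\underline{a}\rangle$, and the rewriting of conjugates of the $a_j$ as words in $\underline{a}$ --- produced by an explicit terminating search. The hypothesis that the $K(Q,1)$ be given as a \emph{combinatorial} complex is exactly what upgrades the abstract statement ``$Q$ is of type $F_3$'' to a concrete finite list of identity sequences; once that list is in hand, the remaining manipulations are routine and uniform, and the required expression of the generators of $P$ as words in those of $H\times H$ is immediate from the identity $P=(N\times 1)\cdot\Delta H$ used at the outset.
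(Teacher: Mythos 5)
Your proposal is correct and follows essentially the same route as the paper: the paper likewise defers the core algorithm to the effective 1--2--3 Theorem of \cite{bhms} (building on \cite{bbms}) and, exactly as you do, explains how the combinatorial 3-skeleton of the $K(Q,1)$ supplies the missing input --- match its 2-skeleton (after collapsing a maximal tree) with a presentation of $Q$ and read off the attaching maps of the 3-cells as identity sequences generating $\pi_2$ as a $\Z Q$-module. Your extra sketch of the internals of the \cite{bhms} algorithm (generators $(x_i,x_i)$, $(a_j,1)$, diagonal relators, conjugation relators found by semidecidable search, plus the $\pi_2$-derived relators) is consistent with that reference, so there is no gap.
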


The theorem proved in \cite{bhms} is actually somewhat stronger than the one we have stated
and is cast in different language; we shall explain the translation. The algorithm in
\cite{bhms}  requires as input  a finite presentation for $H$, a finite generating set for $N$, a finite
presentation $\mathcal Q$ for $Q$, and a set of elements of $\pi_2\mathcal Q_n$
that generate it as a $\Z Q_n$-module. (Our results in \cite{BWggd} show that this last piece
of data is essential.) 

In \cite{bhms}, the generators of $\pi_2\mathcal Q$ are given as identity sequences, which 
can be thought of as formal products $\Theta\equiv \Pi_{i=1}^N x_ir_ix^{-1}$, where the $x_i$ are
words in the generators of $\mathcal Q$, the $r_i^{\pm 1}$ are defining relations in $\mathcal Q$,
and $\Theta$ is freely equal to the empty word. Given a  combinatorial map of a 2-sphere to the
standard 2-complex $K(\mathcal{Q})$ of the presentation $\mathcal Q$, 
it is easy to associate such a product to it: this is 
the first, simple step in the proof of van Kampen's Lemma where, given a diagram with boundary
word $w$, one cuts it into a ``lollipop diagram", the boundary of which is labelled by a
product of conjugates of defining relations that is freely equal to $w$; we are 
discussing the case $w=\emptyset$ (see \cite{LS} p.151 or \cite{mrb:bfs} p.49, for example).
Thus, instead of giving the generators of $\pi_2\mathcal Q$ as identity sequences, it suffices
to give them as combinatorial maps $\Bbb S^2\to K(\mathcal Q)$. 

If one attaches to $K(\mathcal Q)$
one 3-cell  for each element in a set of  maps $\Bbb S^2\to K(\mathcal Q)$, then the resulting 3-complex
will be the 3-skeleton of a $K(Q,1)$ if and only if the homotopy classes of these maps 
generate $\pi_2K(\mathcal Q)$ as a $\Z Q$-module. Changing perspective, if one is given 
a finite combinatorial model for the
3-skeleton of a $K(Q,1)$, one can shrink a maximal tree in the 1-skeleton to make the 2-skeleton
a presentation 2-complex, and the set of attaching maps of the 3-cells will serve as a set of
generators for $\pi_2$ of this presentation.

\subsection{Criteria for profinite equivalence}

Together, Theorems \ref{t:rips} and  \ref{t:123}  provide a mechanism that, given a
finite combinatorial complex with 2-connected universal cover and fundamental
group $Q$, will output finite presentations for $H\times H$ and $P$, and 
will describe the inclusion $u: P\hookrightarrow H\times H$ by expressing the generators of $P$ as words in the
generators of $H\times H$. The final step in the reduction of Theorem \ref{t:main} to Theorem \ref{t:tech}
involves identifying when $\hat{u}$ is surjective, when $\hat{u}$ is an isomorphism, and
when $\wh{P}\cong \wh{H\times H}$. 

\begin{lemma}\label{l:uEpi} Let $1\to N\to H\to Q\to 1$ be an exact sequence of groups
and let $P<H\times H$ be the associated fibre product. 
The maps that $N\hookrightarrow H$
and $P\hookrightarrow H\times H$ induce on profinite completions are surjective if and only if
$\wh{Q}=1$.
\end{lemma}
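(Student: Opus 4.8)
The plan is to deduce everything from one observation: for an exact sequence $1\to N\to H\to Q\to 1$ of abstract groups, the closure in $\widehat H$ of the image of $N$ equals the kernel of the natural map $\widehat H\to\widehat Q$; in particular the image of $N$ is dense in $\widehat H$ if and only if $\widehat Q=1$. The map $\widehat H\to\widehat Q$ is surjective because $H\to Q$ is, and the assertion about the kernel is the elementary remark that, for any finite quotient $H\to F$, the image $N_F$ of $N$ is normal with $F/N_F$ a finite quotient of $Q$: thus $\widehat Q=1$ forces $N_F=F$ for every such $F$, which is precisely density of the image of $N$, while conversely a nontrivial finite quotient $Q\to F$ pulls back to a finite quotient $\widehat H\to F$ killing $N$. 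Since a continuous homomorphism of profinite groups has closed image (by compactness), the image of $\widehat N\to\widehat H$ is exactly this closure; hence $\widehat N\to\widehat H$ is surjective if and only if $\widehat Q=1$, settling the claim for $N\hookrightarrow H$.

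For the fibre product I would first record the elementary structure of $P<H\times H$. Writing $p:H\to Q$, the subsets $N\times 1$ and $1\times N$ both lie in $P$ (since $p(n)=1=p(1)$), and each coordinate projection $H\times H\to H$ restricts to a surjection $P\to H$. Using the identification $\widehat{H\times H}\cong\widehat H\times\widehat H$, let $\bar P\le\widehat H\times\widehat H$ be the image of $\widehat P$, a closed subgroup. Then $\bar P$ contains the images of $N\times 1$ and $1\times N$, hence their closures, which are $\overline{\mathrm{im}\,N}\times 1$ and $1\times\overline{\mathrm{im}\,N}$. If $\widehat Q=1$, then $\overline{\mathrm{im}\,N}=\widehat H$ by the first paragraph, so $\bar P$ contains both $\widehat H\times 1$ and $1\times\widehat H$; as these generate $\widehat H\times\widehat H$, we get $\bar P=\widehat H\times\widehat H$, i.e. $\widehat P\to\widehat H\times\widehat H$ is surjective.

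Conversely, suppose $\widehat P\to\widehat H\times\widehat H$ is surjective. Post-composing with the surjection $\widehat H\times\widehat H\to\widehat Q\times\widehat Q$ (the product of two copies of $\widehat H\to\widehat Q$) gives a surjection $\widehat P\to\widehat Q\times\widehat Q$. But $P$ maps into the diagonal of $Q\times Q$ under $p\times p$, so the image of the dense subgroup $P$ in $\widehat Q\times\widehat Q$ lies in the diagonal $\Delta(\widehat Q)$, which is closed; hence the image of all of $\widehat P$ lies in $\Delta(\widehat Q)$. Therefore $\Delta(\widehat Q)=\widehat Q\times\widehat Q$, which forces $\widehat Q=1$. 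Together with the first paragraph (where $\widehat N\to\widehat H$ surjective already implies $\widehat Q=1$) this yields the stated equivalence.

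I do not anticipate a genuine obstacle: once the structure of the fibre product is in hand the argument is purely formal. The points needing care are the topological housekeeping — that a continuous image of a profinite group is closed, that $\widehat{H\times H}\cong\widehat H\times\widehat H$, and that diagonal subgroups are closed — and the bookkeeping that $N\times 1$ and $1\times N$ genuinely sit inside $P$. The one statement one might be tempted to cite as a black box, the identification $\ker(\widehat H\to\widehat Q)=\overline{\mathrm{im}\,N}$, has the short elementary proof via finite quotients given above, so it is cleanest to include it rather than appeal to right-exactness of profinite completion.
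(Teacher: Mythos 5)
Your argument is correct and follows essentially the same route as the paper: density of (the image of) $N$ in $\wh H$ when $\wh Q=1$ plus compactness of continuous images for the forward direction, and the diagonal obstruction for the converse (you phrase it inside $\wh Q\times\wh Q$ where the paper pulls back the diagonal of a finite quotient $F\times F$, a cosmetic difference). The extra housekeeping you include (closed images, $\wh{H\times H}\cong\wh H\times\wh H$, the density criterion via finite quotients) is exactly what the paper leaves implicit, so there is nothing to add.
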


\begin{proof} 
If $Q$ has no finite quotients then $N$ is dense in $\wh{H}$, so
$\wh{N}\to \wh{H}$, which has compact image, is surjective.
Likewise,  
$N\times N< P$ is dense in $\wh{H\times H}$, so $\wh{P}\to \wh{H\times H}$ is surjective.

Conversely, if there is a surjection from
$Q$ to a non-trivial finite group $F$, then the composition $H\to Q\to F$ extends to $\wh{H}$
and the kernel of this extension is a proper closed subgroup containing $N$.
And the image of $P$ in $\wh{H\times H}$ is contained in the
proper closed subgroup obtained by pulling back the diagonal of $F\times F$.
\end{proof}

The question of when an inclusion induces an isomorphism of profinite completions is more subtle. The following criterion, due to Platonov and Tavgen \cite{PT}, plays a central role in \cite{PT}, \cite{BG} and \cite{mb:karl} --- see \cite[Theorem 5.1]{BG} for a proof.

\begin{proposition}\label{p:PT} Let $1\to N\to H\to Q\to 1$ be an exact sequence of finitely
generated groups with fibre product $P$. If $H_2(Q,\Z)=0$ and $\wh{Q}=1$ then the inclusion
$u:P\hookrightarrow H\times H$ induces an isomorphism of profinite completions.
\end{proposition}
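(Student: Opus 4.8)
The plan is to derive the isomorphism from Lemma~\ref{l:uEpi} together with an analysis of the finite quotients of $P$ in which the hypothesis $H_2(Q;\Z)=0$ is fed through the five-term homology exact sequence. Since $\wh Q=1$, Lemma~\ref{l:uEpi} already gives that $\wh u$ is surjective, so the task is to prove that $\wh u$ is injective. Using $\wh{H\times H}=\wh H\times\wh H$ and the fact that $L\cap P$ is of finite index in $P$ for every finite-index subgroup $L\le H\times H$, injectivity of $\wh u$ is equivalent to the assertion that \emph{for every finite-index normal subgroup $P_0\trianglelefteq P$ there is a finite-index normal subgroup $L\trianglelefteq H\times H$ with $L\cap P\le P_0$.}

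Fix such a $P_0$, put $\phi\colon P\twoheadrightarrow F:=P/P_0$, and write $N_1=P\cap(H\times 1)$, $N_2=P\cap(1\times H)$ and $\Delta=\{(h,h):h\in H\}$, all subgroups of $P$ (with $N_1,N_2$ normal in $P$, each isomorphic to $N$). For $L_0\le H$ one has the elementary identity
\[
(L_0\times L_0)\cap P=\big((L_0\cap N)\times 1\big)\cdot\{(g,g):g\in L_0\},
\]
because $(h_1,h_2)\in P$ forces $h_1h_2^{-1}\in N$. Applying $\phi$, we see it suffices to find a finite-index normal $L_0\trianglelefteq H$ with $\phi_1(L_0\cap N)=1$ and $\delta(L_0)=1$, where $\phi_1:=\phi|_{N_1}$ (a finite quotient of $N$, via $N\cong N_1$) and $\delta:=\phi|_\Delta$ (a finite quotient of $H$, via $H\cong\Delta$). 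The second condition is cheap, since $\ker\delta$ has finite index in $H$, so everything reduces to the following.

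\smallskip
\noindent\textbf{Claim.} \emph{The finite quotient $\phi_1\colon N\twoheadrightarrow F_1:=\phi(N_1)$ extends to a finite quotient of $H$: there is a finite-index normal $L_0\trianglelefteq H$ with $L_0\cap N\le\ker\phi_1$.}
\smallskip

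To prove this, first observe that $\ker\phi_1=N_1\cap P_0$ is normal in $H$: it is normal in $P$, and $P$ acts on $N_1$ by conjugation through the surjection $p_1\colon P\to H$. Hence $H$ acts on the finite group $F_1=N/\ker\phi_1$; let $\theta\colon H\to\operatorname{Aut}(F_1)$ be this action and $H_1^0:=\ker\theta$, a finite-index subgroup of $H$. One checks $H_1^0\cap N=\phi_1^{-1}(Z(F_1))=:\overline N$, and — crucially, because $\wh Q=1$ forces $Q$ to have no proper finite-index subgroup — that $H_1^0$ still surjects onto $Q$, so $1\to\overline N\to H_1^0\to Q\to 1$ is exact. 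On $\overline N$ the map $\phi_1$ takes values in the abelian group $Z(F_1)$ and is invariant under conjugation by $H_1^0$ (which acts trivially on $F_1$), so it factors through $\overline N/[H_1^0,\overline N]$. The five-term exact sequence of $1\to\overline N\to H_1^0\to Q\to 1$,
\[
H_2(Q;\Z)\longrightarrow\overline N/[H_1^0,\overline N]\longrightarrow H_1(H_1^0;\Z)\longrightarrow H_1(Q;\Z)\longrightarrow 0,
\]
together with $H_2(Q;\Z)=0$, shows that $\overline N/[H_1^0,\overline N]$ embeds into the finitely generated abelian group $H_1(H_1^0;\Z)$. Since $\Q/\Z$ is an injective $\Z$-module, the map $\overline N/[H_1^0,\overline N]\to Z(F_1)\hookrightarrow(\Q/\Z)^k$ extends over this embedding to a homomorphism $H_1(H_1^0;\Z)\to(\Q/\Z)^k$; its image is finitely generated, hence finite, and we obtain a finite-index normal subgroup $W\trianglelefteq H_1^0$ with $W\cap\overline N=\ker\phi_1$. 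Taking $L_0$ to be the normal core of $W$ in $H$ gives $L_0\le H_1^0$, so $L_0\cap N\le\overline N$ and therefore $L_0\cap N\le W\cap\overline N=\ker\phi_1$; this proves the Claim. Replacing $L_0$ by its intersection with $\ker\delta$ and putting $L=L_0\times L_0$, the displayed identity yields $\phi(L\cap P)=\phi_1(L_0\cap N)\,\delta(L_0)=1$, i.e.\ $L\cap P\le P_0$.

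The step I expect to be the main obstacle is the Claim, and within it the passage to $H_1^0$ and $\overline N$. One can only exploit $H_2(Q;\Z)=0$ through an extension whose quotient is $Q$ itself, so the point is to isolate the part of $\phi_1$ pulled back from such an extension: cut down to the subgroup $H_1^0$ on which the action on $F_1$ is trivial (using $\wh Q=1$ to keep $Q$ as the quotient) and to $\overline N$ on which $\phi_1$ becomes central-valued, after which the five-term sequence and injectivity of $\Q/\Z$ finish the argument. Everything else — the identity for $(L_0\times L_0)\cap P$, the verification that $\ker\phi_1\trianglelefteq H$, and the assembly of $L$ — is routine bookkeeping. (Alternatively, one can first use $\wh Q=1$ to write $F$ as the product of the commuting normal subgroups $\phi(N_1)$ and $\phi(N_2)$ and reduce to the ``central overlap'' $\phi(N_1)\cap\phi(N_2)$, arriving at the same homological computation.)
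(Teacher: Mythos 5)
Your proof is correct. The paper does not prove Proposition \ref{p:PT} itself but refers to \cite[Theorem 5.1]{BG} (following Platonov--Tavgen \cite{PT}), and your argument is essentially that one: write $(L_0\times L_0)\cap P$ in terms of $L_0\cap N$ and the diagonal, reduce to extending the finite quotient of $N\cong N_1$ over a finite-index subgroup of $H$, pass to the (preimage of the) centraliser of the finite quotient so that the map becomes central-valued, and exploit $\wh{Q}=1$ together with $H_2(Q,\Z)=0$ --- in your version via the five-term sequence and injectivity of $\Q/\Z$, in theirs via the equivalent vanishing of $H^2(Q;A)$ and splitting of the resulting central extension.
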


We also need to be sure that $\wh{P}\not\cong\wh{H\times H}$ when $\hat{u}$ is not an isomorphism. 
Our proof of this requires a lemma and some notation.

We write $\epi(G,S)$ for the set of epimorphisms from a group $G$ to a finite group $S$.
Composition of epimorphisms with the natural map $G\to \wh{G}$ defines a
bijection $\epi(\wh{G}, S)\to \epi(G,S)$, so  if $\wh{G}_1\cong\wh{G}_2$ then
there is a bijection $\epi(G_1, S)\to \epi(G_2,S)$

\begin{lemma}\label{lem: Epi equation}
Let $G$ be a finitely generated group, let $N_1, N_2 <G$
be normal subgroups that commute, let $\G_i=G/N_i$, let $Q=G/N_1N_2$,
and let $S$ be a non-abelian finite simple group. Then 
$$
|\epi(G,S)| = |\epi(\G_1, S)| + |\epi(\G_2,S)| - |\epi(Q, S)|.
$$
\end{lemma}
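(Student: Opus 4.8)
The plan is to sort the epimorphisms $\phi\colon G\to S$ according to the pair of images $(\phi(N_1),\phi(N_2))$ and then to recognise the asserted equality as an instance of inclusion--exclusion. Since $G$ is finitely generated and $S$ is finite, each of the four sets appearing in the statement is finite, so the equation is meaningful. The one substantive point is the following dichotomy: for $\phi\in\epi(G,S)$, each $\phi(N_i)$ is normal in $\phi(G)=S$, hence equals $1$ or $S$ because $S$ is simple; and $\phi(N_1)$ and $\phi(N_2)$ cannot both equal $S$, since $[N_1,N_2]=1$ forces $[\phi(N_1),\phi(N_2)]=1$, which would make $S$ abelian, contrary to hypothesis. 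This is essentially the only idea in the proof.

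Granting this, $\epi(G,S)$ is the disjoint union of three subsets according to whether $(\phi(N_1),\phi(N_2))$ equals $(1,1)$, $(1,S)$ or $(S,1)$. Next I would use the canonical surjections $G\to\G_1$, $G\to\G_2$ and $G\to Q$ to identify $\epi(\G_1,S)$, $\epi(\G_2,S)$ and $\epi(Q,S)$ with, respectively, the set of epimorphisms $G\to S$ that kill $N_1$, that kill $N_2$, and that kill $N_1N_2$ --- this being the routine fact that an epimorphism $G\to S$ trivial on a normal subgroup $M$ is the same thing as an epimorphism $G/M\to S$, since $G\to G/M$ is onto. Under these identifications, a $\phi$ killing $N_1N_2$ is exactly one in class $(1,1)$; a $\phi$ killing $N_1$ has $\phi(N_2)\in\{1,S\}$ by the dichotomy and so lies in class $(1,1)$ or $(1,S)$; and symmetrically a $\phi$ killing $N_2$ lies in class $(1,1)$ or $(S,1)$. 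Thus $\epi(Q,S)$ accounts for the first class, $\epi(\G_1,S)$ for the first two, and $\epi(\G_2,S)$ for the first and third.

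Writing $e_{11},e_{1S},e_{S1}$ for the number of epimorphisms in the classes $(1,1),(1,S),(S,1)$, we obtain $|\epi(G,S)|=e_{11}+e_{1S}+e_{S1}$, while $|\epi(Q,S)|=e_{11}$, $|\epi(\G_1,S)|=e_{11}+e_{1S}$ and $|\epi(\G_2,S)|=e_{11}+e_{S1}$; hence $|\epi(\G_1,S)|+|\epi(\G_2,S)|-|\epi(Q,S)|=e_{11}+e_{1S}+e_{S1}=|\epi(G,S)|$, as required. I do not expect any real obstacle: the content is concentrated in the elementary structural remark that at most one of the two commuting normal images can be all of the non-abelian simple group $S$, and everything else is bookkeeping.
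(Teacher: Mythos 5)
Your proof is correct and follows essentially the same route as the paper's: the key point in both is that each $\phi(N_i)$ is normal in $S$, hence trivial or all of $S$, and that commutativity of $N_1$ with $N_2$ rules out both images being $S$ since $S$ is non-abelian; the rest is the standard identification of $\epi(\G_i,S)$ and $\epi(Q,S)$ with epimorphisms of $G$ killing $N_i$ and $N_1N_2$, plus inclusion--exclusion. Your explicit three-class partition is just a slightly more detailed bookkeeping of the paper's union/intersection argument.
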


\begin{proof} Since $S$ is simple, any epimorphism $\phi: G\to S$ must either
restrict to an epimorphism on $N_i$ or else map $N_i$ trivially. Since
$S$ is not abelian and $N_1$ commutes with $N_2$, it cannot be that $\phi(N_1)=\phi(N_2)=S$.
Thus $\epi(G,S)$ is the union of the set of epimorphisms 
that map $N_1$  trivially and the set of those that map $N_2$ trivially; the former is
bijective with $\epi(\G_1,S)$ and the latter with $\epi(\G_2,S)$. The intersection of these
sets is the set of epimorphisms where $N_1N_2$ maps trivially, and this is bijective with
$|\epi(Q, S)|$. 
\end{proof}

\begin{proposition}\label{p:epic} Let $P<H\times H$ be the fibre product of $1\to N\to H\to Q\to 1$,
where $H$ is finitely generated and $Q$ is finitely presented.
Let $S$ be a non-abelian finite simple group $S$. There is a bijection from
$\epi(P, S)$ to $\epi(H\times H,S)$ if and only if $\epi(Q,S)=\emptyset$.
\end{proposition}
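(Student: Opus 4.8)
The plan is to reduce the statement to two applications of Lemma \ref{lem: Epi equation}. Inside the fibre product $P<H\times H$ I would single out the pair of commuting normal subgroups $N_1=N\times\{1\}$ and $N_2=\{1\}\times N$: since $N\triangleleft H$, each $N_i$ is normal in $H\times H$ and hence in $P$; each lies in $P$ because $(n,1)$ and $(1,n)$ are carried into the diagonal of $Q\times Q$ by $p\times p$; and $[N_1,N_2]=1$ already in $H\times H$. The key is then to recognise the three subquotients occurring in Lemma \ref{lem: Epi equation}. Projection of $P$ to its second coordinate is a surjection onto $H$ (note $(h,h)\in P$ for every $h$) with kernel exactly $N_1$ (if $(h_1,1)\in P$ then $p(h_1)=1$), so $P/N_1\cong H$; symmetrically $P/N_2\cong H$. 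And $p\times p$ restricted to $P$ has image the diagonal $\Delta Q\cong Q$ and kernel $N\times N=N_1N_2$, so $P/N_1N_2\cong Q$. Hence Lemma \ref{lem: Epi equation} gives
$$
|\epi(P,S)|\;=\;2\,|\epi(H,S)|-|\epi(Q,S)|.
$$
To invoke the lemma one wants $P$ finitely generated, which holds because $P$ is generated by the diagonal copy of $H$ together with $\{1\}\times N$ whenever $N$ is finitely generated, as in all our applications; in any case $\epi(H,S)$ is finite because $H$ is finitely generated, and every epimorphism set appearing here embeds into $\epi(H,S)\sqcup\epi(H,S)$, so all of them are finite.

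Next I would run the analogous computation for $G=H\times H$ with $N_1=H\times\{1\}$ and $N_2=\{1\}\times H$. Here $(H\times H)/N_i\cong H$ and $(H\times H)/N_1N_2$ is the trivial group, and since $S$ is nontrivial there are no epimorphisms from the trivial group onto $S$; so Lemma \ref{lem: Epi equation} yields $|\epi(H\times H,S)|=2\,|\epi(H,S)|$. Comparing with the previous display gives
$$
|\epi(P,S)|\;=\;|\epi(H\times H,S)|-|\epi(Q,S)|.
$$
Since both $\epi(P,S)$ and $\epi(H\times H,S)$ are finite, a bijection between them exists if and only if they have equal cardinality, that is, if and only if $|\epi(Q,S)|=0$, i.e.\ if and only if $\epi(Q,S)=\emptyset$; this is exactly the claim.

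I do not expect a serious obstacle: the whole content lies in correctly identifying the subquotients $P/(N\times\{1\})\cong H$ and $P/(N\times N)\cong Q$ of the fibre product, together with the finiteness bookkeeping that lets ``there is a bijection'' and ``the cardinalities agree'' be used interchangeably; everything else is two passes through Lemma \ref{lem: Epi equation}.
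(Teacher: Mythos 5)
Your proof is correct and follows essentially the same route as the paper: the same commuting normal subgroups $N_1=N\times\{1\}$, $N_2=\{1\}\times N$ inside $P$ (the paper writes them as $P\cap(H\times 1)$, $P\cap(1\times H)$), the same identifications $P/N_i\cong H$ and $P/N_1N_2\cong Q$, and the same two applications of Lemma \ref{lem: Epi equation}, the second with $H\times 1$ and $1\times H$ in $H\times H$. The only (harmless) divergence is in the finiteness bookkeeping: the paper cites the 0-1-2 Lemma to conclude that $P$ is finitely generated, whereas you ensure finiteness of all the sets $\epi(\cdot,S)$ directly from the finite generation of $H$, which is all the counting argument actually needs.
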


\begin{proof} The 0-1-2 Lemma (see, for instance, \cite[Lemma 2.1]{BG}) tells us that $P$ is finitely generated, so all of the
sets $\epi(\cdot, S)$ under consideration are finite. Let $N_1=P\cap (H\times 1)$,
let $N_2=P\cap (1\times H)$, and note that $P/N_i\cong H$ (this is the image of $P$ under
projection to one of the factors of $H\times H$). Also, $P/N_1N_2\cong Q$.
Lemma \ref{lem: Epi equation} tells us that $|\epi(P,S)| = 2|\epi(H,S)|$ if and only if $\epi(Q,S)=\emptyset$.  A second application of that lemma, with $H\times 1$ in the role of $N_1$
and $1\times H$ in the role of $N_2$, and $H\times H$ in the role of $G$, implies that $|\epi(H\times H, S)|= 2|\epi(H,S)|$. 
\end{proof}

\subsection{Summary of the reduction}

Let $K_n$ be the sequence of complexes given by Theorem \ref{t:tech} and let $Q_n=\pi_1K_n$.  
Combining Theorems \ref{t:123} and \ref{t:rips}, we obtain (by means of an algorithmic procedure) finite presentations for residually finite groups $P_n$ and $\G_n := H_n\times H_n$ and a monomorphism $u:P_n\hookrightarrow\G_n$.  Lemma \ref{l:uEpi} tells us that $\hat{u}$ is an epimorphism if and only if $\wh{Q}_n\cong 1$, and Proposition \ref{p:PT} tells us that $\wh{Q}_n\cong 1$ also implies that $\hat{u}$ is an isomorphism. Finally, since $Q_n$ is perfect,  $\wh{Q}_n\ncong 1$ implies $\epi(Q,S)\neq\emptyset$ for some finite non-abelian simple group $S$, in which case $\wh{P}_n\not\cong \wh{\G}_n$, by Proposition \ref{p:epic}.
\qed

\section{Universal Central Extensions}

In our proof of Theorem \ref{t:tech} we will need to construct and present universal central extensions
of groups.

A {\em central extension} of a group $Q$ is
a group $\widetilde Q$ equipped with a homomorphism $\pi:\widetilde{Q}\to Q$ whose kernel is central
in $\widetilde Q$. Such an extension is {\em universal} if given any other central extension
$\pi': E \to Q$ of $Q$, there is a unique homomorphism $f : \widetilde Q \to  E$ such that
$\pi'\circ f = \pi$. 

The standard reference for universal central extensions is 
\cite{milnor} pp. 43--47. The
properties that we need here are these: $Q$ has a universal central extension $\widetilde Q$
if (and only if) $H_1(Q, \Z) = 0$; there is a short exact sequence
$$1 \to  H_2(Q,\Z) \to \widetilde{Q}\to Q\to 1\ ;$$
and if $Q$ has no non-trivial finite quotients, then neither does $\widetilde Q$.

The following result is Corollary 3.6 of \cite{mb:karl}; the proof relies on an argument
due to Chuck Miller.


\begin{proposition}\label{p:tilde}
 There is an algorithm that, given a finite presentation $\<A\mid {R}\>$
of a perfect group $Q$, will output a finite presentation $\< A\mid \widetilde{R}\>$ for the
universal central extension $\widetilde Q$. Furthermore, $|\widetilde{R}| = |A|(1 + |{R}|)$.
\end{proposition}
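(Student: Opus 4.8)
The plan is to realise $\widetilde Q$ explicitly as a quotient of the free group $F=F(A)$. Write $Q=F/R$ where $R$ is the normal subgroup of $F$ generated by the $m=|R|$ defining relators $r_1,\dots,r_m$. The starting point is that $[F,R]$ is the normal closure in $F$ of the \emph{finite} set $\{[a,r_i]\mid a\in A,\ 1\le i\le m\}$: writing $M$ for this last normal subgroup, in $F/M$ every $r_i$ commutes with each generator and so is central, whence the image of $R$ is central and $[F,R]\subseteq M$, the reverse inclusion being clear. Consequently $F/[F,R]$ is a central extension of $Q$ with kernel $R/[F,R]$, and Hopf's formula identifies $(R\cap[F,F])/[F,R]$ with $H_2(Q,\Z)$. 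But $F/[F,R]$ is not yet $\widetilde Q$, because it is not perfect --- its abelianisation is $\Z^{|A|}$ --- so the algorithm will adjoin $|A|$ further relators that cut it down without disturbing the centre.

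This is where perfectness of $Q$ is used. For each generator $a_j\in A$ there is, because $Q$ is perfect, a word $w_j$ in the alphabet $A$ written as an explicit product of commutators (so $w_j\in[F,F]$) with $a_j=w_j$ in $Q$; equivalently $\rho_j:=a_jw_j^{-1}$ lies in $R$. Such a $w_j$, together with a witness expressing $\rho_j$ as a product of conjugates of the $r_i^{\pm1}$, is produced by an exhaustive search through words in $A$, and this search is guaranteed to terminate precisely because $Q^{\mathrm{ab}}=1$. The algorithm outputs $\<A\mid\widetilde R\>$ with
\[
\widetilde R\ :=\ \{\,[a,r_i]\mid a\in A,\ 1\le i\le m\,\}\ \cup\ \{\,\rho_j\mid 1\le j\le|A|\,\},
\]
which has exactly $|A|\,m+|A|=|A|(1+|R|)$ relators.

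It remains to prove that $\<A\mid\widetilde R\>$ presents $\widetilde Q$. Put $S=\langle\!\langle\widetilde R\rangle\!\rangle_F$ and $G=F/S$, so $[F,R]\subseteq S\subseteq R$; since each $\rho_j$ is central in $F/[F,R]$, the image $S/[F,R]$ of $S$ is the subgroup $\langle\bar\rho_1,\dots,\bar\rho_{|A|}\rangle$ of $R/[F,R]$. As $R/(R\cap[F,F])\cong F/[F,F]\cong\Z^{|A|}$ by perfectness and $\Z^{|A|}$ is free, the sequence $1\to H_2(Q,\Z)\to R/[F,R]\to\Z^{|A|}\to1$ splits; under such a splitting $\bar\rho_j$ has the form $(h_j,e_j)$, where $e_1,\dots,e_{|A|}$ is the standard basis (the image of $\rho_j=a_jw_j^{-1}$ in $F/[F,F]$ is $e_j$), so $\langle\bar\rho_j\rangle$ is a complement to $H_2(Q,\Z)$ and $R/S\cong H_2(Q,\Z)$. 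Also $G$ is perfect, since in $G^{\mathrm{ab}}$ each $a_j$ equals $w_j\equiv1$. Thus $G$ is a \emph{perfect} central extension of $Q$ with kernel $R/S\cong H_2(Q,\Z)$. The universal property of $\widetilde Q$ now gives a homomorphism $\phi\colon\widetilde Q\to G$ over $Q$; its image together with the central subgroup $R/S$ generates $G$, and $G$ is perfect, so $\phi$ is surjective. Restricting to kernels, $\phi$ induces a surjection $H_2(Q,\Z)=\ker(\widetilde Q\to Q)\to R/S\cong H_2(Q,\Z)$; since $Q$ is finitely presented, $H_2(Q,\Z)$ is finitely generated abelian, so this surjective endomorphism is an isomorphism, and then the short five lemma shows $\phi$ itself is an isomorphism. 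Hence $\<A\mid\widetilde R\>$ presents $\widetilde Q$, and $|\widetilde R|=|A|(1+|R|)$ as asserted.

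The main obstacle is this last identification: the construction visibly yields \emph{some} perfect central extension of $Q$ carrying the expected centre, but confirming that it is the \emph{universal} one rests on the finite generation of $H_2(Q,\Z)$, so that a surjective endomorphism of it is injective --- and this is precisely where the hypothesis that $Q$ be finitely presented, rather than just finitely generated, is essential; it is the content of Miller's argument. The only other point that needs checking is effectivity, and every step is plainly algorithmic except the production of the words $w_j$, whose search halts by perfectness.
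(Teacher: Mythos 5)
Your proof is correct, and it is essentially the approach behind the paper's own treatment: the paper gives no argument for Proposition \ref{p:tilde}, citing instead Corollary 3.6 of \cite{mb:karl} (Miller's argument), where the presentation is exactly the one you build --- the commutators $[a,r_i]$ forcing the relators to be central, together with one relator $a_jw_j^{-1}$ per generator with $w_j$ a product of commutators found by a search that halts because $Q$ is perfect, giving the stated count $|A|(1+|R|)$. Your verification that this perfect central extension with kernel isomorphic to $H_2(Q,\Z)$ must be the universal one (surjectivity of the comparison map from universality and perfection, then injectivity on kernels because the finitely generated abelian group $H_2(Q,\Z)$ is Hopfian, then the short five lemma) is a sound, self-contained rendering of that cited argument.
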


\section{Classifying spaces for universal central extensions}

In our earlier work on the triviality problem for profinite completions, we considered a construction that transforms a presentation $\P$ for a group $G$ into an aspherical complex $\mathcal{J}(\P)$ with fundamental group $\J_G$, such that $\widehat{\J}_G=\widehat{G}$.  (See \cite[Definition 7.1]{BWprotrivial}, in which $\mathcal{J}(\P)$ is denoted by $S(\mathcal{\P})$.)  In this section, our main task is to algorithmically construct an aspherical complex that represents the universal central extension of $\J_G$.  We start by recalling the construction of $\mathcal{J}(\P)$.

\subsection{The main construction}\label{ss:main}

Let $X$ be the standard 2-complex of a finitely presented group $J=\<B\mid S\>$  and let $c$ be a parameterization of the edge corresponding to a generator $\alpha$ of infinite order.  Let $\P \equiv \<a_1,\dots,a_n \mid r_1,\dots, r_m\>$ be a presentation of a group $G$.

\begin{definition}
We define $\mathcal J(\P)$ to be the 2-complex obtained from the standard 2-complex  of the presentation $\P$ by replacing each 2-cell with a copy of $X$ that is attached with an annulus along which one can homotop the boundary cycle of the deleted 2-cell to the loop $c$ in $X$.   Let $\mathcal J_G$ be the fundamental group of $\mathcal J(\P)$.
\end{definition}

Using subscripts to distinguish generators and relations in the $m$ disjoint copies of $J$, we get the
following presentation of $\mathcal J_G$:
$$
\<B_1,\dots, B_m, a_1,\dots, a_n \mid S_1, \dots, S_m, r_1\alpha_1^{-1},\dots, r_m\alpha_m^{-1}\>~.
$$
\begin{remark}\label{r:rem}
\begin{enumerate}
\item The construction of $\mathcal{J}(\P)$ from $\P$ is entirely algorithmic.
\item The group $\J_G$ is the fundamental group of a tree of groups $\mathcal T$; the underlying tree is $T_m$, as defined
in Definition \ref{d:complexes}, the local group at the central vertex is
free on $\{a_1,\dots,a_n\}$, the other vertex groups are copies of $J$, and the edge groups are infinite cyclic.
\item There is a canonical map $\J_G\to G$ obtained by quotienting out the generators $B_i$, for all $i$.
\end{enumerate}
\end{remark}

We shall require the following basic properties of this construction. 

\begin{proposition}\label{p:properties}
Let $X$, $J$, $\P$ and $G$ be as above.
\begin{enumerate}
\item If $J$ is perfect, then $H_1(\mathcal J_G,\Z)\cong H_1(G,\Z)$.
\item If $X$ is aspherical, then $\mathcal J(\P)$ is aspherical. 
\item If $\wh{J}\cong 1$, then the canonical map $\J_G\to G$ induces an isomorphism $\wh{\J}_{G}\overset{\cong}\to\widehat{G}$.
\end{enumerate}
\end{proposition}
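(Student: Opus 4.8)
The plan is to verify the three assertions separately, exploiting the description of $\mathcal J(\P)$ as a graph of spaces (equivalently, $\J_G$ as the fundamental group of the tree of groups $\mathcal T$ from Remark \ref{r:rem}(2)).

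\medskip

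For (1), I would use the fact that $\J_G$ decomposes as an iterated amalgamated product over infinite cyclic edge groups: the central free vertex group $F=F(a_1,\dots,a_n)$ amalgamated with $m$ copies of $J$, where in the $i$th copy the edge generator $\alpha_i$ is identified with the relator word $r_i$. The Mayer--Vietoris sequence for a graph of groups gives $H_1(\J_G,\Z)$ as a quotient of $H_1(F,\Z)\oplus\bigoplus_i H_1(J,\Z)$ by the images of the edge groups. If $J$ is perfect, $H_1(J,\Z)=0$, so each edge group maps to zero in its $J$-vertex, and one is left with $H_1(F,\Z)=\Z^n$ modulo the classes of the $r_i$; this is precisely the abelianization computed from the presentation $\P$, i.e. $H_1(G,\Z)$. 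One just has to be careful that the gluing annulus makes $\alpha_i$ homotopic to $r_i$ and not to a conjugate or inverse, which does not affect the homology class.

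\medskip

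For (2), asphericity: $\mathcal J(\P)$ is built by gluing, along $\pi_1$-injective circles (the edge of $X$ labelled $\alpha$ has infinite order, hence the circle is $\pi_1$-injective in $X$, and a loop traversing $r_i$ is non-trivial — actually one only needs that the circles are $\pi_1$-injective, which holds since $\alpha$ has infinite order in $J$ and the $r_i$-loop sits inside the 1-skeleton which injects) pieces that are each aspherical: the copies of $X$ by hypothesis, the annuli trivially, and the central piece is a graph (the 1-skeleton of the presentation 2-complex of $\P$ with its 2-cells removed), which is aspherical. By Whitehead's theorem / the standard fact that a graph of aspherical spaces glued along $\pi_1$-injective subspaces with aspherical edge spaces is aspherical (the universal cover is a tree of contractible spaces), $\mathcal J(\P)$ is aspherical.

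\medskip

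For (3), the profinite statement is the one I expect to be routine given the machinery: the canonical map $\J_G\to G$ kills all the $B_i$, and its kernel is the normal closure of $\bigcup_i B_i$. Now apply $\widehat{\cdot}$: I would show $\widehat{\J_G}\to\widehat G$ is an isomorphism by checking it induces a bijection on finite quotients. A finite quotient of $\J_G$ restricts to a finite quotient of each vertex group $J$; since $\wh J\cong 1$, each $J$ maps trivially, so in particular each $B_i$ maps trivially; hence every finite quotient of $\J_G$ factors through $\J_G/\langle\!\langle B_1,\dots,B_m\rangle\!\rangle = G$. Conversely every finite quotient of $G$ pulls back to one of $\J_G$. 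This gives the bijection $\epi(\J_G,F)\leftrightarrow\epi(G,F)$ compatible with composition for every finite $F$, hence $\wh{\J}_G\overset{\cong}\to\wh G$.

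\medskip

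The main obstacle is purely bookkeeping in part (2): one must confirm that the attaching circles of the annuli really are $\pi_1$-injective in each copy of $X$ (this is where the hypothesis that $\alpha$ has \emph{infinite} order is used) and in the central graph piece, so that the graph-of-spaces has the expected aspherical universal cover. Parts (1) and (3) are then straightforward consequences of the graph-of-groups structure and the hypothesis $\wh J\cong 1$ respectively.
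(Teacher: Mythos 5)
Your proposal is correct and follows essentially the same route as the paper: for (2) you invoke the standard graphs-of-aspherical-spaces fact (the paper cites Scott--Wall), where the needed $\pi_1$-injectivity of the edge circles comes from $\alpha$ having infinite order in $J$ and from the relators $r_i$ being non-trivial reduced words in $F_n$ (your parenthetical about the $1$-skeleton is the one slightly muddled point -- lying in a graph does not by itself give $\pi_1$-injectivity, non-triviality of $r_i$ does), and for (3) you give exactly the paper's argument that any homomorphism from $\J_G$ to a finite group kills each copy of $J$ and hence factors through $G$. The only variation is (1), where the paper simply reads the abelianisation off the presentation of $\J_G$ (perfectness of $J$ forces the $B_i$-generators to die), whereas you run Mayer--Vietoris over the tree of groups; both yield $H_1(\J_G,\Z)\cong H_1(G,\Z)$.
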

\begin{proof}
If $J$ is perfect, then in the abelianisation of $\mathcal J_G$, the generators from $B_i$ must all have trivial image for $i=1,\dots,m$. It follows that the abelianisation of $\mathcal J_G$ is the abelianisation of the group obtained by deleting all occurrences of the letters $B_i$ from the given presentation of  $\mathcal J_G$, and this group is $G$.  This proves (1).

Assertion (2) is a standard fact about graphs of aspherical spaces. See, for instance \cite[Proposition 3.6(ii)]{SW}.

To prove (3), note that since $J_i\cong J$ for $i=1,\dots,m$ and
$\wh{J}\cong 1$, each  $J_i<\J_G$ has trivial
image under every homomorphism $\phi:\J_G\to Q$ to a finite group. Thus $\phi$
factors through the quotient of $\J_G$ by the normal closure of the $J_i$, as required.
\end{proof}

\subsection{Tubular bundles}

We now introduce a class of spaces which conveniently describe the universal central extensions of groups of the form $\J_G$.

\begin{definition}\label{d:complexes} Let $d$, $n$ and $m$ be positive integers and let $T_m$ be the simplicial
tree with vertex set $\{v_0,\dots,v_m\}$ and edge set $\{ \{v_0,v_i\} : i=1,\dots m\}$. 
A {\em{tubular bundle of type $(d;n,m)$}}
is defined by the following data:
\begin{enumerate} 
\item a family of finite combinatorial 2-complexes $\underline{V}=( V_i \mid i=1,\dots,m)$;
\item a family $\underline{C}=( c_i \mid i=1,\dots,m)$, where
$c_i:\S^1\to V_i^{(1)}$ is a locally injective loop in the 1-skeleton of $V_i$;
\item a family of non-trivial reduced words $\underline{R}=(\rho(i) \mid i=1,\dots,m)$ in the free group $F_n
={\rm{Free}}(a_1,\dots,a_n)$ of rank $n$;
\item a family of elements $\underline{Z}=(z_i\in \Z^d \mid i=1,\dots, m)$. 
\end{enumerate}
Canonically associated to these data, there is a $(d+2)$-dimensional space $K(\underline{V}, \underline{C},
\underline{R}, \underline{Z})$ defined as follows.

Let $Y_n$ denote the $n$-rose (1-vertex graph) whose 1-cells are oriented and labelled 
$a_1,\dots,a_n$,
and identify
words $u\in F_n$ with  edge-loops $\gamma_u:\S^1\to Y_n$ based at the vertex.  Let $\To=\R^d/\Z^d$ be the standard $d$-torus. 
Let $\theta:[0,1]\to\S^1$ be
a parametrisation of $\S^1$.
Then
$K(\underline{V}, \underline{C},
\underline{R}, \underline{Z})$ is the quotient of
$$
(\To\times Y_n) \sqcup \coprod_{i=1}^m \left(\To\times \S^1 \times [0,1]\right)_i\sqcup \coprod_{i=1}^m (\To \times V_i)$$
by the equivalence relation that attaches $(\To\times \S^1 \times\{0\})_i$ to $\To\times Y_n$ by
$(t,\theta,0)\sim (t, \gamma_{\rho(i)}(\theta))$ and attaches $(\To\times \S^1 \times\{1\})_i$ to $\To\times V_i$ by  $(t,\theta,1)\sim (t + \theta z_i,c_i(\theta))$.
\end{definition}

\begin{notation} 
When $V_i$ and $c_i$ are independent of $i$ (equal to $X$ and $c$, say) we adopt the abbreviated notation 
$K^{X, c} (\underline{R}, \underline{Z})$ and say that the complex is a 
{\em{tubular bundle of type $(d;n,m)$ over $(X,c)$}}. 

A sequence of such complexes $K_n = K^{X, c} (\underline{R}_n, \underline{Z}_n)$ is termed {\em{recursive}}
if the sequences $\underline{R}_n$ and $ \underline{Z}_n$ are recursively enumerable.
\end{notation}

\begin{example}\label{l:all}
For fixed $(X,c)$ and $(d;n,m)$, the set of all tubular bundles of type $(d;n,m)$ over $(X,c)$ is recursive.
\end{example}

\begin{remark} 
(1) Because the attaching maps of the edge spaces
are affine, there is a simple algorithmic process that will subdivide 
$K^{X, c} (\underline{R}, \underline{Z})$  into a combinatorial CW complex, thence a simplicial complex, but
it is more natural to work with tubular bundles since they faithfully and naturally encode the geometry of the
situation that interests us, with an economical amount of defining data.

\smallskip

(2) A technical advantage of tubular bundles over combinatorial CW-complexes is the flexibility afforded by the
fact that the
attaching maps of the edge spaces can map $k$-cells across cells of dimension greater than $k$.
This is equally natural and 
desirable when trying to cellulate torus bundles over the circle, for example.

\smallskip

(3) In Definition \ref{d:complexes} we restricted to the case where the underlying graph is a tree, the fibre is a
torus, and the vertex spaces are combinatorial 2-complexes. We did so because this is the case that is needed in the
proof of Theorem \ref{t:tech}, but it is easy to imagine other contexts in which one would want to relax
these constraints.
\end{remark}

Our interest in tubular bundles derives from the following proposition, which is expressed in the notation of
Subsection \ref{ss:main}, in particular $G$ is the group with presentation
 $\<a_1,\dots,a_n \mid r_1,\dots, r_m\>$ and $X$ is the standard 2-complex of a presentation for the group
 $J$, with $\alpha\in J$ a generator of infinite order.

\begin{proposition}\label{p: tubular properties} 
If $X$ is aspherical, $G$ is perfect and  $H_1(J,\Z)\cong H_2(J,\Z)\cong 0$, then the universal central extension of $\mathcal J_G$ is the fundamental group of an aspherical tubular bundle $K^{X, c} (\underline{R}, \underline{Z})$ of type $(d;n,m)$, where $d=m-n$ and $\underline{R}=(r_i \mid i=1,\dots,m)$.
 
\end{proposition}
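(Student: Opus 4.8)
The plan is to realise the universal central extension of $\J_G$ as $\pi_1$ of the total space of a principal $\To$-bundle over the aspherical complex $\J(\P)$, and to recognise that total space as a tubular bundle $K^{X,c}(\underline R,\underline Z)$ with $\underline R=(r_i)$ and a suitable choice of $\underline Z$.

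First I would assemble the homological input. Since $H_1(J,\Z)=0$ the group $J$ is perfect, so Proposition~\ref{p:properties}(1) gives $H_1(\J_G,\Z)\cong H_1(G,\Z)=0$; hence $\J_G$ is perfect and has a universal central extension, sitting in $1\to H_2(\J_G,\Z)\to\widetilde{\J_G}\to\J_G\to1$. As $X$ is aspherical, $\J(\P)$ is aspherical (Proposition~\ref{p:properties}(2)), so $H_2(\J_G,\Z)=H_2(\J(\P),\Z)$, and I would compute the latter by Mayer--Vietoris for the decomposition of $\J(\P)$ into a neighbourhood of the rose $Y_n$, the $m$ copies of $X$, and the $m$ tube-circles joining them. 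Using $H_1(X,\Z)=H_2(X,\Z)=0$ (which follow from $H_1(J,\Z)=H_2(J,\Z)=0$ and the asphericity of $X$), the sequence degenerates to a left-exact sequence $0\to H_2(\J(\P),\Z)\to\Z^m\xrightarrow{\,\overline r\,}\Z^n$ in which $\overline r$ sends the $i$-th tube-circle to the class of $r_i$ in $H_1(Y_n,\Z)\cong\Z^n$. Perfectness of $G$ gives $\mathrm{coker}\,\overline r=H_1(G,\Z)=0$, so $\overline r$ is onto; its target is free, hence $\ker\overline r$ is a direct summand of $\Z^m$ of rank $d=m-n$, and $H_2(\J_G,\Z)\cong\ker\overline r\cong\Z^d$.

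Next I would study the tubular bundle. Translation in the torus coordinate makes $\To=\R^d/\Z^d$ act on $K^{X,c}((r_i),\underline Z)$; all of the attaching maps in Definition~\ref{d:complexes} are equivariant and the quotient is precisely $\J(\P)$, so $K^{X,c}((r_i),\underline Z)$ is the total space of a principal $\To$-bundle over $\J(\P)$. It is therefore aspherical (aspherical base and fibre, with $\pi_2(\J(\P))=0$), and, $\To$ being connected, $\pi_1 K^{X,c}((r_i),\underline Z)$ is a central extension of $\J_G$ by $\Z^d$; concretely, its van Kampen presentation is that of $\J_G$ with $d$ central generators adjoined and the shifts $z_i$ inserted into the relators $r_i\alpha_i^{-1}$. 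Central extensions of the perfect group $\J_G$ by $\Z^d$ are classified by $H^2(\J_G;\Z^d)$, which by the universal coefficient theorem (and $H_1(\J_G,\Z)=0$) equals $\mathrm{Hom}(H_2(\J_G,\Z),\Z^d)=\mathrm{Hom}(\ker\overline r,\Z^d)$, such an extension being universal exactly when the associated homomorphism is an isomorphism (cf.\ \cite{milnor}). Since the bundle is trivial over each of $\To\times Y_n$, $\To\times\S^1\times[0,1]$ and $\To\times V_i$, with the only nontrivial clutching the shift $(t,\theta)\mapsto(t+\theta z_i,\cdot)$ at the far end of the $i$-th tube, and since a basis of $H_2(\J(\P),\Z)\cong\ker\overline r$ is represented by $2$-cycles supported on the tube-circles, a direct computation should identify the class of this bundle in $\mathrm{Hom}(\ker\overline r,\Z^d)$ with the homomorphism $(k_i)\mapsto\sum_i k_i z_i$. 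As $\underline Z$ ranges over $(\Z^d)^m$ this map ranges over all of $\mathrm{Hom}(\ker\overline r,\Z^d)$, because $\ker\overline r$ is a summand of $\Z^m$; so I can choose $\underline Z$ (the columns of a homomorphism $\Z^m\to\Z^d$ restricting to an isomorphism on $\ker\overline r$) for which $(k_i)\mapsto\sum_i k_i z_i$ is an isomorphism $\ker\overline r\xrightarrow{\,\cong\,}\Z^d$. For that $\underline Z$, $\pi_1 K^{X,c}((r_i),\underline Z)$ is the universal central extension of $\J_G$, and the resulting aspherical tubular bundle is of type $(d;n,m)$ with $d=m-n$ and $\underline R=(r_i)$, as claimed.

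The one step I expect to demand real care is the identification of the classifying class of the principal $\To$-bundle with $(k_i)\mapsto\sum_i k_i z_i$: this is standard obstruction theory (equivalently, a Hopf-formula computation on the van Kampen presentation), but one must check that a basis of $H_2(\J(\P),\Z)$ can genuinely be represented by $2$-cycles concentrated on the tube-circles — so that only the torus shifts $z_i$ contribute to the pairing — and keep the orientation conventions consistent. The remaining ingredients are routine: the Mayer--Vietoris computation, the universal coefficient theorem, and the standard classification of central extensions.
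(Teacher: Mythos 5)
Your overall strategy is sound but runs in the opposite direction to the paper's. The paper works outward from the group: it computes $H_2(\J_G,\Z)\cong\Z^{m-n}$ (by an Euler characteristic count rather than your Mayer--Vietoris, to the same effect), uses $H_2(J,\Z)=0$ to split the universal central extension over each vertex group $J_i$ and over $F_n$, defines $z_i:=\sigma_0(r_i)^{-1}\sigma_i(\alpha_i)$ as the discrepancy between these splittings, and then realises $\widetilde{\J}_G$ directly as the fundamental group of a graph of spaces with vertex spaces $X\times\To^{m-n}$ and $Y_n\times\To^{m-n}$ and affine edge maps, i.e.\ as $K^{X,c}(\underline{R},\underline{Z})$ with that specific $\underline{Z}$. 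You instead work inward from the spaces: every $K^{X,c}((r_i),\underline{Z})$ is a principal $\To$-bundle over the aspherical complex $\J(\P)$, hence aspherical with $\pi_1$ a central extension of $\J_G$ by $\Z^d$, and you choose $\underline{Z}$ so that the class in $\mathrm{Hom}(H_2(\J_G,\Z),\Z^d)$ is an isomorphism, which characterises the universal central extension of a perfect group. Your route buys a clean classification of all the candidate bundles and proves existence of a suitable $\underline{Z}$, which is all the Proposition asserts (and all the later algorithm uses, since it searches over all $\underline{Z}$); the paper's route avoids Euler classes entirely and produces the $z_i$ explicitly.

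The step you flag is, however, wrong in the form you propose to verify it: no basis of $H_2(\J(\P),\Z)$ can be represented by $2$-cycles supported on the tubes. The subcomplex consisting of $Y_n$ together with the annuli deformation retracts onto $Y_n$, so its $H_2$ vanishes; any cycle representing a nonzero class must also contain $2$-chains inside the copies of $X$ capping off the relevant multiples of $\alpha_i$ (possible precisely because $H_1(X,\Z)=0$), so you cannot localise the cycle away from the vertex spaces and argue that ``only the shifts contribute'' on those grounds. The identification of the bundle class with $(k_i)\mapsto\sum_i k_iz_i$ is nonetheless correct, and the honest argument is already latent in your own Mayer--Vietoris sequence: for the decomposition into (rose $\cup$ tubes) and $\coprod_i X_i$, the bundle is trivial over each piece (over the $X_i$-side one absorbs the twist into the tube, which is possible because $z_i\in\Z^d$), and the transition functions on the $m$ separating circles are loops in $\To$ of classes $z_1,\dots,z_m$; the embedding $H_2(\J(\P),\Z)\hookrightarrow\Z^m$ in your sequence is exactly the homology connecting map for this decomposition, and compatibility of the homology and cohomology connecting maps with the evaluation pairing gives $\langle e(K),(k_i)\rangle=\pm\sum_i k_iz_i$. (Equivalently, the Hopf-formula computation you mention in passing, applied to the van Kampen presentation in which the relators are $r_i\alpha_i^{-1}$ shifted by $z_i$, gives the same answer.) With that repair, and keeping track of the sign convention you already note, your proof goes through.
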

\begin{proof}
Proposition \ref{p:properties} assures us that $\mathcal J_G$ is perfect and therefore has a universal central extension. It also assures us that
the 2-dimensional complex
$\mathcal J(\P)$ is aspherical, so $H_2(\mathcal J_G,\Z)$ is the kernel of the second boundary map in the cellular chain complex of $\mathcal J(\P)$,
hence free abelian. The rank of this group can be calculated by Euler characteristic: on the one hand, since $H_1(\mathcal J_G,\Z)\cong 0$, we have $\chi(\mathcal{J}(\P)) = \rk~H_2(\mathcal J(\P),\Z) +1$. On the other hand, counting cells, we have $\chi(\mathcal J(\P))=m\chi(X) + (1-n)$, whence $\chi(\mathcal{J}(\P))=m-n+1$, since $H_1(X,\Z)\cong H_2(X,\Z)\cong0$ implies that $\chi(X)=1$.

Thus the universal central extension of $\mathcal J_G$ has the form
\begin{equation}\label{ucc}
1\to \Z^{m-n} \to \widetilde{\mathcal{J}}_G\overset{p}\to \mathcal{J}_G\to 1.
\end{equation}
We are assuming that $H_2(J,\Z)= H_2(X,\Z)\cong 0$, so restricting to $J_i<G$ for each $i=1,\dots,m$, we get 
a central extension 
$$1\to \Z^{m-n} \to p^{-1}(J_i) \overset{p}\to J_i\to 1$$ that splits. And  
$1\to \Z^{m-n} \to p^{-1}(F_n) \overset{p}\to F_n\to 1$ also splits, where $F_n=\pi_1Y_n$.
We fix splittings $\sigma_i:J_i\to \mathcal J_G$ and $\sigma_0:F_n\to \mathcal J_G$. Since (\ref{ucc}) itself does not split,
the  splittings $\sigma_i$ and $\sigma_0$ cannot all agree on the edge groups $\<\alpha_i\> = J_i\cap F_n$, so at least one
of the  elements $z_i:=\sigma_0(r_i)^{-1} \sigma_i(\alpha_i)\in \Z^{m-n}$ is non-trivial. 

Via the projection $\widetilde{\mathcal{J}}_G\to \mathcal J_G$, we have an action of $\widetilde{G}$ on the Bass--Serre tree
of the splitting $\mathcal J_G=\pi_1\mathcal T$ from Remark \ref{r:rem}. The kernel of the action on this tree is 
the centre $\Z^{m-n}$ of $\widetilde{\mathcal{J}}_G$. 
The resulting graph-of-groups decomposition of $\widetilde{\mathcal{J}}_G$  has in the middle the  vertex group 
$p^{-1}(F_n)\cong F_n\times\Z^{m-n}$, and the other vertex
groups are $p^{-1}(J_i) = J\times \mathbb Z^{m-n}$ (where we write $=$ rather than $\cong$ because the
splitting is unique); the edge groups are of the form
$\mathbb Z^{m-n+1}=\mathbb Z\times \mathbb Z^{m-n}$,
where the edge morphisms respect the $\mathbb Z^{m-n}=H_2(\mathcal J_G,\mathbb Z)$ factor. 
We realise this graph of groups as a graph of spaces with vertex spaces $X_i\times \To^{m-n}$
and $Y_n\times \To^{m-n}$, with the edge-space inclusions represented by the
unique base-point preserving affine maps that induce the desired maps at the level of $\pi_1$.
This inclusion map is the obvious one at the extremal vertices, with the
first factor of $\mathbb Z\times \mathbb Z^{m-n}$ mapping to $\sigma_i(\alpha_i)$, the image of $\alpha_i$
under the unique splitting of $p^{-1}(J_i) \overset{p}\to J_i$. At the middle vertex, the decompositon  $p^{-1}(F_n)\cong F_n\times\Z^{m-n}$
depends on the choice of splitting $\sigma_0:F_n\to p^{-1}(F_n)$ and, correspondingly, while the central
factor $1\times \mathbb Z^{m-n}$ in the edge group maps  to the second factor of $p^{-1}(F_n)$, 
 the generator of the first factor $\Z$ maps to $\sigma_0(r_i)$, which is $\sigma_i(\alpha_i)z_i^{-1}$. 
 
 Thus we have exhibited $\widetilde{\mathcal{J}}_G$ as the fundamental group of a graph of spaces whose
 total space is $K^{X, c} (\underline{R}, \underline{Z})$ where $\underline{R}=(r_i \mid i=1,\dots,m)$
 and $\underline{Z}=(z_i\mid i=1,\ldots,m)$ with $z_i:=\sigma_0(r_i)^{-1} \sigma_i(\alpha_i)\in \Z^{m-n}$. 
\end{proof}

\begin{example}\label{ex:hig} We shall need groups $J$ with $\wh{J}=1$ that satisfy the hypotheses of Proposition \ref{p: tubular properties}.
The first such group was discovered by  Graham Higman \cite{hig}; it has the
aspherical presentation
$$
J = \<a_1,a_2,a_3,a_4 \mid a_{2}^{-1}a_1a_{2} a_{1}^{-2}, a_{3}^{-1}a_2a_{3} a_{2}^{-2},
a_{4}^{-1}a_3a_{4} a_{3}^{-2}, a_{1}^{-1}a_4a_{1} a_{4}^{-2}\>.
$$
Many more such examples are described in \cite{BG}.  
\end{example}

\section{Proving Theorem \ref{t:tech} with a recursive class of tubular bundles of type $(d;n,m)$}

In  \cite{BWprotrivial} we constructed a recursive sequence of finite group presentations
$\P(k) \equiv \< A \mid R_k\>$ (with $A$ a fixed finite alphabet and $S_k$ of a fixed
cardinality) so that for the groups
$G(k)=|\P(k)|$,
$$
\{k\in\N \mid \widehat{G(k)}\ncong 1\}
$$ is recursively enumerable but not recursive. We may also assume that the groups are perfect.  Let $n=|A|$, let $m=|R_k|$, and let $d=m-n$.

We fix a group $J$ as in Example \ref{ex:hig}, take $\alpha$ to be a generator of infinite order and $X$
to be the standard 2-complex of an aspherical presentation of $J$.
We then apply  the construction of Subsection \ref{ss:main} to the sequence of presentations $\P(k)$
to obtain a recursive sequence of presentations $\J(\P(k))$ for groups $\J_{G(k)}$. 

According to Proposition \ref{p:properties}, the groups $\J_{G(k)}$ are perfect,  
 the presentations  $\J(\P(k))$ are aspherical, and $\wh{\J}_{G(k)}\cong 1$ if and only
 if $\widehat{G(k)}\cong 1$. So Theorem \ref{t:tech} 
would be proved at this stage if each $H_2(\J_{G(k)},\Z)=0$. But euler characteristic tells us that this is not the case,
because $m>n$. We therefore pass to the universal central extension of $\J_{G(k)}$, appealing
to Proposition \ref{p:tilde} to derive a recursive sequence of finite presentations
$\widetilde{\P}_k$ for the groups $\widetilde{\J}_{G(k)}$,  noting that $\widetilde{\J}_{G(k)}$
has a non-trivial finite quotient if and only if ${\J}_{G(k)}$ does.

We will be done if we can descibe an algorithm that, taking each $\widetilde{\P}_k$  in turn, produces an explicit model for the classifying space
$K(\widetilde{\P}_k,1)$.

Proposition \ref{p: tubular properties} assures us that $\widetilde{\J}_{G(k)}$ is the fundamental
group of an aspherical
tubular bundle $K^{X, c} (\underline{R_k}, \underline{Z})$ of type $(d;n,m)$.  
An obvious algorithm enumerates all
such bundles as $\underline{Z}$ varies, and the Seifert--van Kampen theorem
provides an explicit presentation for the fundamental group of each.
Proceeding along this list of presentations with a naive search, we will eventually identify 
an isomorphism between $\widetilde{\J}_{G(k)}$, as presented by $\widetilde{\P}_k$, and 
a presentation on the list. When such is found, our algorithm outputs 
the corresponding complex $K^{X, c} (\underline{R_k}, \underline{Z})$ as 
the sought-after model for the classifying space $K(\widetilde{\P}_k,1)$.

This completes the proof of Theorem \ref{t:tech}.
\qed

\begin{remarks}
(1) The interested reader can verify that the presentations that we constructed to
prove Theorem A have the additional property that the sets $A_n, B_n$ and $S_n$ are of fixed cardinality, while $|R_n|$ is uniformly bounded.

(2)
If there exists a hyperbolic group that is not residually finite, then one can prove Theorem A much more directly: if such a group exists, then Theorem 9.6 of \cite{BWprotrivial} will produce a recursive sequence of finite presentations $\mathcal{Q}_n$, each presenting a perfect hyperbolic group $Q_n$, so that there is no algorithm that can determine which of the groups presented has trivial profinite completion; one can then apply 
the algorithm of  \cite{BReeves}  to construct an explicit model for the 3-skeleton of a classifying space 
for the universal central extension $\widetilde{Q}_n$; and one can use this sequence of complexes in place of those yielded by Theorem B.
\end{remarks}

\end{document}